\newcommand{\R}{\mathbf{R}}
\newcommand{\N}{\mathbf{N}}
\newcommand{\eps}{\epsilon}
\DeclareMathOperator{\E}{\mathbf{E}}
\DeclareMathOperator{\p}{\mathbf{P}}
\theoremstyle{plain}
\newtheorem{theorem}{Theorem}%[section]
\newtheorem{lemma}[theorem]{Lemma}
\theoremstyle{definition}
\theoremstyle{remark}
\begin{document}
\title[A Short and Elementary Proof of the CLT]{A Short and Elementary Proof of the Central Limit Theorem by Individual Swapping}
\author{Calvin Wooyoung Chin}
\date{\today}

\begin{abstract}
We present a short proof of the central limit theorem which is elementary
in the sense that no knowledge of characteristic functions, linear operators,
or other advanced results are needed.
Our proof is based on Lindeberg's trick of swapping a term for a normal
random variable in turn.
The modifications needed to prove the stronger Lindeberg--Feller central limit
theorem are addressed at the end.
\end{abstract}

\thanks{This work is supported by the National Research Foundation of Korea
grants NRF-2019R1A5A1028324 and NRF-2017R1A2B2001952.
The author would like to thank the reviewers for valuable suggestions.
He would also like to thank his friend Kihyun Kim for the comments he gave
on the first version of this manuscript.}

\maketitle

\section{Introduction.}

Let $Z$ be a continuous random variable with density $f_Z\colon\R\to\R$
given by
\[ f_Z(t) = \frac{e^{-t^2/2}}{\sqrt{2\pi}}
\qquad \text{for all $t \in \R$.} \]
We call $Z$ a \emph{standard normal} random variable.
The celebrated Lindeberg--L\'evy central limit theorem (CLT) reads as follows.

\begin{theorem}[Central limit theorem] \label{thm:clt}
Let $X_1,X_2,\ldots$ be independent and identically distributed
real-valued random variables having mean $0$ and variance $1$.
Then
\[
\lim_{n\to\infty} \p\biggl(\frac{X_1+\cdots+X_n}{\sqrt{n}} \le x\biggr)
= \p(Z \le x) \qquad \text{for all $x \in \R$.}
\]
\end{theorem}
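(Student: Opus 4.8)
The plan is to prove the equivalent statement that $\E[g(S_n)] \to \E[g(Z)]$ for every $g \in C^3(\R)$ with $g,g',g'',g'''$ all bounded, writing $S_n = (X_1 + \cdots + X_n)/\sqrt n$, and then to recover the asserted convergence of distribution functions by sandwiching. Concretely, given $x \in \R$ and $\delta > 0$, I would choose smooth functions with bounded derivatives trapping $\mathbf{1}_{(-\infty,x]}$ between $\mathbf{1}_{(-\infty,x-\delta]}$ and $\mathbf{1}_{(-\infty,x+\delta]}$, so that the convergence $\E[g(S_n)] \to \E[g(Z)]$ forces $\limsup_n \p(S_n \le x) \le \p(Z \le x+\delta)$ and $\liminf_n \p(S_n \le x) \ge \p(Z \le x-\delta)$; letting $\delta \downarrow 0$ and invoking the continuity of the normal distribution function then yields Theorem~\ref{thm:clt}.

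For the core estimate I would introduce independent standard normal variables $Z_1,Z_2,\ldots$, independent of the $X_i$, and interpolate between $S_n$ and $(Z_1 + \cdots + Z_n)/\sqrt n \eqd Z$ by replacing one summand at a time. Setting $U_k = (Z_1 + \cdots + Z_{k-1} + X_{k+1} + \cdots + X_n)/\sqrt n$, which is independent of both $X_k$ and $Z_k$, the telescoping identity
\[
\E[g(Z)] - \E[g(S_n)]
= \sum_{k=1}^{n} \Bigl( \E\bigl[ g(U_k + Z_k/\sqrt n) \bigr] - \E\bigl[ g(U_k + X_k/\sqrt n) \bigr] \Bigr)
\]
reduces the whole problem to comparing a single swapped term inside each summand.

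I would then Taylor-expand $g$ to second order about $U_k$. Since $X_k$ and $Z_k$ share their first two moments ($0$ and $1$), after scaling by $1/\sqrt n$ the constant, linear, and quadratic contributions cancel term by term by independence, leaving only the Taylor remainders $R(U_k,t) = g(U_k + t) - g(U_k) - g'(U_k)t - \tfrac12 g''(U_k)t^2$ evaluated at $t = Z_k/\sqrt n$ and $t = X_k/\sqrt n$. The normal side is harmless: because $\E|Z_k|^3 < \infty$, the crude bound $|R(x,t)| \le \tfrac16\|g'''\|_\infty|t|^3$ makes $\sum_{k=1}^n \E\bigl|R(U_k, Z_k/\sqrt n)\bigr|$ of order $n^{-1/2}$.

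The main obstacle is the $X$ side, because Theorem~\ref{thm:clt} assumes only finite variance, so third moments of the $X_i$ need not exist and the cubic remainder bound is unavailable. I would resolve this with the two-sided estimate $|R(x,t)| \le \min\{ \|g''\|_\infty t^2,\ \tfrac16\|g'''\|_\infty |t|^3 \}$, splitting according to whether $|X_k| \le \eps\sqrt n$. On the event $\{|X_k| \le \eps\sqrt n\}$ the cubic bound together with $\E\bigl[|X_k|^3\,\mathbf{1}_{\{|X_k| \le \eps\sqrt n\}}\bigr] \le \eps\sqrt n\,\E[X_k^2] = \eps\sqrt n$ contributes at most a constant multiple of $\eps$ after summing over $k$; on the complementary event the quadratic bound contributes $\|g''\|_\infty\,\E\bigl[X_1^2\,\mathbf{1}_{\{|X_1| > \eps\sqrt n\}}\bigr]$, which tends to $0$ as $n \to \infty$ for each fixed $\eps$ by dominated convergence, using $\E[X_1^2] = 1$. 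Sending $n \to \infty$ and then $\eps \downarrow 0$ drives the entire sum to $0$, completing the argument. I expect this truncation step to be the delicate point, and it is precisely the mechanism that, replacing the single common distribution by the Lindeberg condition, will deliver the Lindeberg--Feller theorem at the end.
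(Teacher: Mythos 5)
Your proposal is correct and follows essentially the same route as the paper: the same reduction to smooth test functions by a sandwiching (portmanteau) argument, the same Lindeberg one-term swap exploiting matched first and second moments and independence, and the same truncation at $\eps\sqrt{n}$ that splits the Taylor remainder between a third-derivative bound on the small event and a second-derivative bound on the large one, with the Lindeberg-type quantity $\E[X_1^2;|X_1|>\eps\sqrt n]\to 0$ handled by dominated convergence. The only cosmetic difference is that you dispatch the Gaussian side directly via $\E|Z_k|^3<\infty$ (giving an $O(n^{-1/2})$ bound), whereas the paper reuses the truncation argument symmetrically for the normal summands.
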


Theorem \ref{thm:clt} explains why the ``bell-shaped" curve appears
frequently in the histograms of many natural populations.
It also allows us to apply many probabilistic or statistical methods
that work for normal distributions to problems regarding other
types of distributions.

Various proofs of Theorem \ref{thm:clt} often involve characteristic functions,
which are Fourier transforms under a different name.
Due to the relative difficulty of building the theory of characteristic
functions, the proof of Theorem \ref{thm:clt} is often deferred to
a graduate course in probability.

There have been attempts to prove Theorem \ref{thm:clt} without using
characteristic functions.
Trotter \cite{Tro59} revived the idea Lindeberg \cite{Lin22} used to prove
the central limit theorem, and replaced characteristic functions with
linear operators on some function space $C$.
His proof is elegant, but also abstract due to its reliance on the theory of
operators.
Also, the motivation behind the operator $T_X \colon C \to C$ 
defined for each random variable $X$ by
\[ (T_Xf)(y) = \E[f(X+y)] \qquad \text{for each $y \in \R$} \]
is not so intuitive to nonexperts.

The so-called Stein method \cite{Ste86} (see \cite{Bol82} for its application
to the Berry--Esseen theorem for martingales)
can be used to prove Theorem \ref{thm:clt} by focusing on the identity
\[ \E[f'(Z) - Zf(Z)] = 0 \]
instead of characteristic functions, where $f$ is well behaved in some sense.
At the heart of the method is the Stein continuity theorem
\cite[Theorem 2.2.13]{Tao12}, which asserts that under certain conditions,
\[ \lim_{n\to\infty} \E[f'(S_n)-S_nf(S_n)] = 0 \]
for the same type of $f$ as above implies
\[
\lim_{n\to\infty} \p(S_n \le x) = \p(Z \le x) \qquad \text{for all $x \in \R$.}
\]
Proving and using the Stein continuity theorem makes the proof of Theorem
\ref{thm:clt} significantly less elementary.

A recent paper \cite{DG17} provides yet another proof of Theorem \ref{thm:clt}
avoiding characteristic functions, but the proof is rather lengthy and
is based on the notion of Haar expansions.
Also, there are many proofs (\cite{CPP98}, \cite[Theorem 2.2.8]{Tao12},
and \cite{ZH13}) that avoid characteristic functions but require more conditions,
$\E[|X_1|^3] <\infty$ for example, than Theorem \ref{thm:clt}.

In this note, we provide a short proof of Theorem \ref{thm:clt}
which is elementary in the sense that no knowledge of characteristic functions,
linear operators, or advanced results such as Stein's continuity theorem
are needed. We will, however, use one direction of the so-called
portmanteau theorem, but we believe this result to be at the level of
introductory-level analysis (Trotter's proof, Stein's method, and
\cite{DG17} all rely on this result).

Our proof is based on Lindeberg's trick \cite{Lin22} of swapping a term
for a normal random variable in turn.
Although it is in German, a nice survey \cite{EL14} on Lindeberg's method
exists in the literature.
It actually contains some of the calculations we make in this note.

A natural modification of our proof yields
the Lindeberg--Feller central limit theorem (Theorem \ref{thm:clt_lf} below),
which is an important generalization of Theorem \ref{thm:clt}.
The modification we need will be addressed at the end of this note.

\section{Proof of Theorem \ref{thm:clt}.}

The following is a variant of one direction of the \emph{portmanteau} theorem,
which is an indispensable tool when we deal with the type of convergence
(called \emph{convergence in distribution}) that Theorem \ref{thm:clt} asserts.

\begin{lemma} \label{lem:port}
Let $S_1,S_2,\ldots$ be real-valued random variables.
Assume that
\[ \lim_{n\to\infty} \E[f(S_n)] = \E[f(Z)] \]
for all three-times differentiable $f \colon \R \to \R$
such that $f$, $f'$, $f''$, and $f'''$ are bounded.
Then
\[
\lim_{n\to\infty} \p(S_n \le x) = \p(Z \le x) \qquad \text{for all $x \in \R$.}
\]
\end{lemma}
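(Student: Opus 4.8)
The plan is to squeeze the indicator of $(-\infty,x]$ between two smooth functions to which the hypothesis applies, and then to close the resulting gap using the continuity of the distribution function of $Z$. Fix $x \in \R$ and $\delta > 0$. The first step is to fix, once and for all, a three-times differentiable ramp $\phi \colon \R \to [0,1]$ whose derivatives $\phi'$, $\phi''$, $\phi'''$ are bounded and which satisfies $\phi(t)=1$ for $t \le 0$ and $\phi(t)=0$ for $t \ge 1$. From $\phi$ I would build
\[ h(t) = \phi\Bigl(\frac{t-x}{\delta}\Bigr), \qquad g(t) = \phi\Bigl(\frac{t-x+\delta}{\delta}\Bigr). \]
For fixed $\delta$ each of $g$ and $h$ is three-times differentiable with bounded derivatives (the derivatives merely pick up factors $\delta^{-1}$, $\delta^{-2}$, $\delta^{-3}$), so the hypothesis applies to both. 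By design $g=1$ on $(-\infty,x-\delta]$ and $g=0$ on $[x,\infty)$, while $h=1$ on $(-\infty,x]$ and $h=0$ on $[x+\delta,\infty)$, so that pointwise
\[ \mathbf{1}_{(-\infty,x-\delta]} \le g \le \mathbf{1}_{(-\infty,x]} \le h \le \mathbf{1}_{(-\infty,x+\delta]}. \]

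Second, I would apply these inequalities with argument $S_n$ and take expectations, giving $\E[g(S_n)] \le \p(S_n \le x) \le \E[h(S_n)]$. Letting $n \to \infty$ and invoking the hypothesis for $g$ and $h$, together with the outer two inequalities evaluated at $Z$, yields
\[ \p(Z \le x-\delta) \le \E[g(Z)] \le \liminf_{n\to\infty} \p(S_n \le x) \le \limsup_{n\to\infty} \p(S_n \le x) \le \E[h(Z)] \le \p(Z \le x+\delta). \]
Finally, since $Z$ has the continuous bounded density $f_Z$, the function $x \mapsto \p(Z \le x) = \int_{-\infty}^x f_Z$ is continuous, so both outer bounds tend to $\p(Z \le x)$ as $\delta \to 0$. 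The squeeze then forces the $\liminf$ and the $\limsup$ to coincide with $\p(Z \le x)$, which is the claim. It is worth noting that this last step is where the everywhere-continuity of the normal distribution function does its work: the general portmanteau conclusion only gives convergence at continuity points of the limit, but here that restriction is vacuous.

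The one point that needs genuine care, if the argument is to stay elementary, is the very first step: producing the ramp $\phi$ without appealing to mollifiers or to bump functions of the form $e^{-1/t}$, which a nonexpert reader might regard as nonelementary. I expect this construction to be the only real obstacle. I would handle it by writing down an explicit $C^3$ transition --- for instance a single piecewise polynomial of low degree that interpolates the prescribed values and vanishing first three derivatives at the endpoints $0$ and $1$ --- and then checking directly that $\phi'$, $\phi''$, $\phi'''$ are continuous, hence bounded on the compact interval $[0,1]$ and zero outside it. Everything after the construction is a routine squeeze together with the continuity of $\int_{-\infty}^x f_Z$, so the bulk of the work is concentrated in this single elementary lemma.
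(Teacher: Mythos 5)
Your proof is correct and follows essentially the same route as the paper's: sandwiching the indicator $\mathbf{1}_{(-\infty,x]}$ between two $C^3$ transition functions with bounded derivatives, applying the hypothesis to each, and closing the gap via the continuity of the normal distribution function (the paper phrases this with a fixed $\eps$ and $\eta$ rather than a $\delta \to 0$ limit, but the mechanism is identical). Your added remark on constructing the ramp $\phi$ as an explicit piecewise polynomial is a fine elementary way to justify what the paper simply asserts as the existence of ``smooth transition functions.''
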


\begin{proof}
Let $x \in \R$ and $\eps > 0$ be given.
Since $Z$ has a continuous probability density function,
there is some $\eta > 0$ such that
\[ \p(Z\le x)-\eps < \p(Z\le x-\eta) \quad \text{and} \quad
\p(Z\le x+\eta) < \p(Z\le x) + \eps. \]
Let $f,F \colon \R \to [0,1]$ be three-times differentiable functions
such that the first, second, and third derivatives are bounded,
$f(t) = 1$ for $t \le x-\eta$, $f(t) = 0$ for $t \ge x$,
$F(t) = 1$ for $t \le x$, and $F(t) = 0$ for $t \ge x+\eta$.
These types of functions are sometimes called
\emph{smooth transition functions.}

Since
\[ \lim_{n\to\infty} \E[f(S_n)] = \E[f(Z)] \ge \p(Z\le x-\eta)
> \p(Z\le x) - \eps \]
and
\[ \lim_{n\to\infty} \E[F(S_n)] = \E[F(Z)] \le \p(Z\le x+\eta)
< \p(Z\le x) + \eps, \]
there is some $N \in \N$ such that
\[ \p(Z\le x) - \eps < \E[f(S_n)] \le \p(S_n \le x) \le \E[F(S_n)]
< \p(Z\le x) + \eps \]
for all $n \ge N$.
As $\eps$ is arbitrary, the proof is finished.
\end{proof}

Now we are ready for the main body of the proof.

\begin{proof}[Proof of Theorem \ref{thm:clt}]
Let $Y_1,Y_2,\ldots$ be independent and identically distributed
standard normal random variables independent from $X_1,X_2,\ldots\,$.
For each $n \in \N$, let
\[
S_{n,i} := \frac{X_1+\cdots+X_{i-1}+Y_{i+1}+\cdots+Y_n}{\sqrt{n}}
\qquad \text{for $i=1,\ldots,n$}
\]
and
\[
Z_{n,i} := \frac{X_1+\cdots+X_{i}+Y_{i+1}+\cdots+Y_n}{\sqrt{n}}
\qquad \text{for $i=0,\ldots,n$.}
\]
A direct computation of probability density functions tells us that
$Z_{n,0}$ is a standard normal random variable.
So, the sequence $Z_{n,n}, Z_{n,n-1},\ldots, Z_{n,0}$ is a process of
turning $(X_1+\cdots+X_n)/\sqrt{n}$ into a standard normal random variable
by swapping one term at a time.
Our strategy is to show that $Z_{n,i}$ and $Z_{n,i-1}$ have similar
distributions, in some sense, for each $i=1,\ldots,n$.
We will use $S_{n,i}$ as a step between $Z_{n,i}$ and $Z_{n,i-1}$.

Let $f \colon \R \to \R$ be any three-times differentiable function
such that $f$, $f'$, $f''$, and $f'''$ are bounded.
Since $Z_{n,0}$ is standard normal, Lemma \ref{lem:port} tells us that
it is enough to show
\begin{equation} \label{eq:clt_2}
\E[f(Z_{n,n})] - \E[f(Z_{n,0})] \to 0 \qquad \text{as $n \to \infty$.}
\end{equation}
For each $n \in \N$ and $i=1,\ldots,n$, Taylor's theorem yields
\begin{equation} \label{eq:clt_1}
f(Z_{n,i}) - f(S_{n,i}) - \frac{f'(S_{n,i})X_i}{\sqrt{n}}
- \frac{f''(S_{n,i})X_i^2}{2n} = \frac{(f''(C_{n,i})-f''(S_{n,i}))X_i^2}{2n}
\end{equation}
where $C_{n,i}$ lies between $S_{n,i}$ and $Z_{n,i}$.
Let $\eps > 0$ be given. Since $f'''$ is bounded, we can take $\delta > 0$
so that
\[ \delta |f'''(t)| \le \eps \qquad \text{for all $t \in \R$.} \]
Then $|x-y| \le \delta$ implies $|f''(x)-f''(y)| \le \eps$
by the mean value theorem.
For a random variable $R$ and an event $A$, let us write
$\E[R;A] := \E[R1_A]$.
If we denote the right side of \eqref{eq:clt_1} by $R_{n,i}$,
we have
\begin{equation} \label{eq:clt_less}
\E[|R_{n,i}|;|X_i| \le \delta\sqrt{n}] \le \frac{\eps}{2n}\E [X_i^2]
= \frac{\eps}{2n}
\end{equation}
since $|X_i|/\sqrt{n} \le \delta$ implies
$|f''(C_{n,i})-f''(S_{n,i})| \le \eps$.
On the other hand, we have
\begin{equation} \label{eq:clt_more}
\E[|R_{n,i}|; |X_i| > \delta\sqrt{n}] \le %\frac{\max_{x\in\R}|f''(x)|}{n}
\frac{M}{n}
\E[X_i^2; |X_i| > \delta\sqrt{n}],
\end{equation}
where $M$ is a finite number such that $|f''(x)| \le M$ for all $x \in \R$.
If $n$ is large enough, then the last display is less than $\eps/2n$,
and so $\E[|R_{n,i}|] \le \eps/n$ for all $i=1,\ldots,n$.
Taking the absolute values of the expectations of both sides of \eqref{eq:clt_1}
while noticing that $S_{n,i}$ and $X_i$ are independent, we have
\[
|\E[f(Z_{n,i})] - \E[f(S_{n,i})] - \E[f''(S_{n,i})]/2n| \le
\eps/n \qquad \text{for all $i = 1,\ldots,n$}
\]
for all large $n$.

The same argument applied to $Z_{n,i-1}$ and $S_{n,i}$
instead of $Z_{n,i}$ and $S_{n,i}$ (so that $Y_i$ plays the role of $X_i$) yields
\[
|\E[f(Z_{n,i-1})] - \E[f(S_{n,i})] - \E[f''(S_{n,i})]/2n| \le
\eps/n \qquad \text{for all $i = 1,\ldots,n$}
\]
for all large $n$.
Conflating the last two displays, we have
\[
|\E[f(Z_{n,i})]-\E[f(Z_{n,i-1})]| \le 2\eps/n \qquad
\text{for all $i = 1,\ldots,n$}
\]
for all large $n$. Summing up the last display for all
$i = 1,\ldots,n$, we have
\[
|\E[f(Z_{n,n})] - \E[f(Z_{n,0})]| \le 2\eps \qquad \text{for all large $n$.}
\]
Since $\eps$ is arbitrary, we have proved \eqref{eq:clt_2}.
\end{proof}

\section{The Lindeberg--Feller CLT.}

In this section, we modify our proof of Theorem~\ref{thm:clt}
to prove the following more general theorem which applies to
nonidentically distributed random variables.

\begin{theorem}[Lindeberg--Feller CLT] \label{thm:clt_lf}
For each $n \in \N$, let
\[ X_{n,1},\ldots,X_{n,m_n} \qquad \text{($m_n \in \N$)} \]
%$X_{n,1},\ldots,X_{n,m_n}$, where $m_n \in \N$,
be independent real-valued random variables with mean zero.
If
\[
%\lim_{n\to\infty} \sum_{i=1}^{m_n} \E[X_{n,i}^2] = 1
\sum_{i=1}^{m_n} \E[X_{n,i}^2] = 1 \qquad \text{for all $n\in\N$}
\]
and
\[
\lim_{n\to\infty} \sum_{i=1}^{m_n} \E[X_{n,i}^2; |X_{n,i}| > \delta] = 0
\qquad \text{for all $\delta > 0$,}
\]
then we have
\[
\lim_{n\to\infty} \p\biggl(\sum_{i=1}^{m_n} X_{n,i} \le x\biggr)
= \p(Z \le x) \qquad \text{for all $x \in \R$.}
\]
\end{theorem}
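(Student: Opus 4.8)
The plan is to mimic the swapping argument of Theorem~\ref{thm:clt}, replacing the single index $i$ running up to $n$ with the double array. First I would introduce independent normal random variables $Y_{n,i}$ with $Y_{n,i} \sim N(0, \E[X_{n,i}^2])$, independent of the $X_{n,i}$, and set
\[
S_{n,i} := X_{n,1} + \cdots + X_{n,i-1} + Y_{n,i+1} + \cdots + Y_{n,m_n}
\]
together with the analogous $Z_{n,i}$ that includes $X_{n,i}$. Because the variances sum to $1$, the all-normal sum $Z_{n,0}$ is exactly standard normal, so by Lemma~\ref{lem:port} it again suffices to prove $\E[f(Z_{n,m_n})] - \E[f(Z_{n,0})] \to 0$ for every bounded $f$ with three bounded derivatives.

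Next I would run the same Taylor expansion \eqref{eq:clt_1} at each $S_{n,i}$, so that the zeroth-, first-, and second-order terms cancel between the $X$-swap and the $Y$-swap (the first-order term vanishes in expectation by independence and mean zero; the second-order term agrees because $\E[X_{n,i}^2] = \E[Y_{n,i}^2]$ by construction). This reduces the whole problem to controlling the sum of the remainders $\E[|R_{n,i}|]$ over $i = 1,\ldots,m_n$. For each remainder I would split the expectation over $\{|X_{n,i}| \le \delta\}$ and $\{|X_{n,i}| > \delta\}$ exactly as in \eqref{eq:clt_less} and \eqref{eq:clt_more}, giving a bound of the shape
\[
\E[|R_{n,i}|] \le \tfrac{\eps}{2}\,\E[X_{n,i}^2]
+ M\,\E[X_{n,i}^2; |X_{n,i}| > \delta].
\]
Summing over $i$ and using $\sum_i \E[X_{n,i}^2] = 1$ controls the first piece by $\eps/2$, while the Lindeberg condition forces the second piece to $0$; the corresponding $Y$-remainders are handled the same way, with the Lindeberg-type tail for the normals following from the uniform smallness of the variances $\max_i \E[X_{n,i}^2] \to 0$ (itself a consequence of the Lindeberg condition).

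The main obstacle, and the real difference from the i.i.d.\ case, is the bookkeeping of the tail terms. In Theorem~\ref{thm:clt} the single bound \eqref{eq:clt_more} was made small by the dominated-convergence fact that $\E[X_1^2; |X_1| > \delta\sqrt{n}] \to 0$, and each of the $n$ identical copies contributed the same amount. Here the summands are neither identical nor individually negligible in a uniform way, so I cannot pull a common factor out; instead the entire point of the Lindeberg hypothesis is that the \emph{sum} of truncated second moments vanishes, and I must keep the sum intact rather than bounding term by term. I would therefore be careful to accumulate $\E[|R_{n,i}|]$ across $i$ \emph{before} taking the limit, so that the Lindeberg condition can be applied to the aggregate. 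Verifying that the normal side also satisfies the Lindeberg condition is the one genuinely new computation, and I expect it to be a short estimate using the Gaussian tail once $\max_i \E[X_{n,i}^2] \to 0$ is established.
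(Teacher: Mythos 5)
Your proposal is correct and follows essentially the same route as the paper: swapping with variance-matched Gaussians $Y_{n,i}$, Taylor expansion with the remainder split at $\delta$, summing the aggregate error so the Lindeberg hypothesis applies, and reducing the Gaussian tail terms to the fact that $\max_i \E[X_{n,i}^2] \to 0$. The one step you leave as a sketch---the Lindeberg-type bound for the normals---is carried out in the paper exactly as you anticipate, via the Gaussian moment identity $\E[Y_{n,i}^4] = 3(\E[Y_{n,i}^2])^2$ and the estimate $\sum_i \E[Y_{n,i}^2; |Y_{n,i}|>\delta] \le \delta^{-2}\sum_i \E[Y_{n,i}^4] \le 3\delta^{-2}\max_i \E[Y_{n,i}^2]$.
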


\begin{proof}

Let $Y_{n,1},\ldots,Y_{n,m_n}$ be independent
normal random variables that are independent from $X_{n,1},\ldots,X_{n,m_n}$,
and whose means are zero and variances are $\E[X_{n,1}^2],\ldots,\E[X_{n,m_n}^2]$.
By a normal random variable with mean zero and variance $v \ge 0$,
we mean $\sqrt{v}N$ where $N$ is a standard normal random variable.
We will let $X_{n,i}$ and $Y_{n,i}$ take the roles of $X_i/\sqrt{n}$
and $Y_i/\sqrt{n}$, respectively, and proceed as in the proof of
Theorem~\ref{thm:clt}.

Let
\[ S_{n,i} := X_{n,1} + \cdots + X_{n,i-1} + Y_{n,i+1} + \cdots + Y_{n,m_n}
\quad \text{for $i=1,\ldots,m_n$} \]
and
\[ Z_{n,i} := X_{n,1} + \cdots + X_{n,i} + Y_{n,i+1} + \cdots + Y_{n,m_n}
\quad \text{for $i=0,\ldots,m_n$.} \]
Note that $Z_{n,m_n}=\sum_{i=1}^{m_n} X_{n,i}$ and that $Z_{n,0}$
is standard normal.
Given $\eps > 0$ and a three-times differentiable $f\colon\R\to\R$ with
$f$, $f'$, $f''$, and $f'''$ bounded, we will show that
\begin{equation} \label{eq:lf_diff_small}
|\E[f(Z_{n,m_n})] - \E[f(Z_{n,0})]| \le 2\eps
\quad \text{for all sufficiently large $n$.}
\end{equation}
Then the conclusion will follow from Lemma \ref{lem:port}.

As in \eqref{eq:clt_2}, Taylor's theorem implies
\begin{equation} \label{eq:clt_lf_taylor}
f(Z_{n,i})-f(S_{n,i}) - f'(S_{n,i})X_{n,i} - \frac{1}{2}f''(S_{n,i})X_{n,i}^2
= \frac{1}{2}(f''(C_{n,i})-f''(S_{n,i}))X_{n,i}^2
\end{equation}
where $C_{n,i}$ lies between $S_{n,i}$ and $Z_{n,i}$.
Let $R_{n,i}$ denote the right side.
In the same way we had \eqref{eq:clt_less} and \eqref{eq:clt_more} above, we have
\[ \E[|R_{n,i}|;|X_{n,i}|\le \delta] \le \frac{\eps}{2}\E[X_{n,i}^2] \]
and
\[ \E[|R_{n,i}|;|X_{n,i}|> \delta] \le M\E[X_{n,i}^2;|X_{n,i}|>\delta] \]
where $\delta$ and $M$ are taken as in the proof of Theorem~\ref{thm:clt}.
%for some $\delta > 0$, where $M$ is the same as above.

Taking the absolute values of the expectations of both sides of
\eqref{eq:clt_lf_taylor}, we have
%Following the proof of Theorem \ref{thm:clt} leads us to
\[
\begin{split}
|\E[f(Z_{n,i})] - \E[f(S_{n,i})] - &\E[f''(S_{n,i})]\E[X_{n,i}^2]/2| \\
&\le \E[|R_{n,i}|;|X_{n,i}| \le \delta] + \E[|R_{n,i}|;|X_{n,i}| > \delta] \\
&\le \frac{\eps}{2}\E[X_{n,i}^2] + M \E[X_{n,i}^2;|X_{n,i}| > \delta].
\end{split}
\]
As in the proof of Theorem \ref{thm:clt}, a similar argument yields
\[
\begin{split}
|\E[f(Z_{n,i-1})] - \E[f(S_{n,i})] - &\E[f''(S_{n,i})]\E[Y_{n,i}^2]/2| \\
&\le \frac{\eps}{2}\E[Y_{n,i}^2] + M \E[Y_{n,i}^2;|Y_{n,i}| > \delta].
\end{split}
\]

Since $\E[X_{n,i}^2] = \E[Y_{n,i}^2]$ for all $n$ and $i$,
conflating the last two displays yields
\begin{multline*}
|\E[Z_{n,i}] - \E[Z_{n,i-1}]| \\
\le \eps\E[X_{n,i}^2] + M\E[X_{n,i}^2;|X_{n,i}|>\delta]
+ M\E[Y_{n,i}^2;|Y_{n,i}|>\delta].
\end{multline*}
If we sum up each side for $i=1,\ldots,m_n$, then the first two terms
in the right side tend to $\eps$ and $0$, respectively,
by the assumptions. %and the definition of $Y_{n,i}$.
Thus, to establish \eqref{eq:lf_diff_small}, we only need to show
%that \[ |\E[f(Z_{n,m_n})] - \E[f(Z_{n,0})]| \le 2\eps \]
%for all large $n$, we only need to show
%\[ \sum_{i=1}^{m_n} \Bigl[ \frac{\eps}{2}\E[X_{n,i}^2] +
%M\E[X_{n,i}^2;|X_{n,i}|>\delta] + \frac{\eps}{2}\E[Y_{n,i}^2]
%+ M\E[Y_{n,i}^2;|Y_{n,i}|>\delta] \Bigr]. \]
%By the assumptions of Theorem \ref{thm:clt_lf}, showing that
%\[ |\E[f(Z_{n,m_n})] - \E[f(Z_{n,0})]| \le
%\sum_{i=1}^{m_n} |\E[f(Z_{n,i})] - \E[f(Z_{n,i-1})]| \le 2\eps \]
%for all large $n$ boils down to proving
%%One new difficulty that arises is showing
\begin{equation} \label{eq:clt_3}
\lim_{n\to\infty} \sum_{i=1}^{m_n} \E[Y_{n,i}^2;|Y_{n,i}|>\delta] = 0.
%\qquad \text{for all $\delta > 0$.}
\end{equation}

To show \eqref{eq:clt_3}, first observe that for each $\eps > 0$, we have
\[
\begin{split}
\max_{i=1}^{m_n} \E[X_{n,i}^2] &\le \eps^2 + \max_{i=1}^{m_n}
\E[X_{n,i}^2;|X_{n,i}|>\eps] \\
&\le \eps^2 + \sum_{i=1}^{m_n} \E[X_{n,i}^2;|X_{n,i}|>\eps],
\end{split}
\]
whose right side converges to $\eps^2$ as $n \to \infty$.
Thus, we have
\begin{equation} \label{eq:max_null}
\lim_{n\to\infty} \max_{i=1}^{m_n} \E[Y_{n,i}^2]
= \lim_{n\to\infty} \max_{i=1}^{m_n} \E[X_{n,i}^2] = 0.
\end{equation}
Since $\E[Z^2]=1$ and $\E[Z^4]=3$ for standard normal $Z$,
we have $\E[Y_{n,i}^4] = 3(\E[Y_{n,i}^2])^2$
for all $n \in \N$ and $i=1,\ldots,m_n$.
Thus, using the fact that
\[ Y_{n,i}^2 \le \delta^{-2}Y_{n,i}^4 \qquad \text{if $|Y_{n,i}| > \delta$,} \]
we have
\[
\begin{split}
\sum_{i=1}^{m_n} \E[Y_{n,i}^2;|Y_{n,i}|>\delta]
&\le \delta^{-2}\sum_{i=1}^{m_n} \E[Y_{n,i}^4]
= 3\delta^{-2} \sum_{i=1}^{m_n} (\E[Y_{n,i}^2])^2 \\
&\le 3\delta^{-2}\biggl(\max_{i=1}^{m_n} \E[Y_{n,i}^2]\biggr)
\biggl(\sum_{i=1}^{m_n} \E[Y_{n,i}^2]\biggr) \\
&= 3\delta^{-2} \max_{i=1}^{m_n} \E[Y_{n,i}^2].
\end{split}
\]
Since the right side converges to $0$ as $n \to \infty$ by \eqref{eq:max_null},
we obtain \eqref{eq:clt_3}.

\end{proof}

% References
%\bibliographystyle{alpha}
%\bibliography{references}
\end{document}